\documentclass{article}
\usepackage{amsfonts,amsmath,amssymb,amsthm}
\usepackage{tikz}
\usepackage{fullpage}
\usepackage{hyperref}
\usepackage{multicol}
\usepackage{accents}

\newcommand{\beto}{\boldsymbol{\eta}}
\newcommand{\bzeta}{\boldsymbol{\zeta}}
\newcommand{\bxi}{\boldsymbol{\xi}}

\newcommand{\HH}{\mathcal{H}}
\newcommand{\VV}{\mathcal{V}}
\newcommand{\PiHV}{(\Pi_{\HH} \otimes \Pi_{\VV})}

\newcommand{\binomq}[2]{\binom{#1}{#2}_q}
\newtheorem{theorem}{Theorem}[section]

\newtheorem{corollary}[theorem]{Corollary}
\theoremstyle{remark}

\newtheorem{reemark}{Remark}

\begin{document}

\title{A Short Note on Markov Duality in Multi--species Higher Spin Stochastic Vertex Models}

\author{Jeffrey Kuan}

\date{}

\maketitle

\abstract{We show that the multi--species higher spin stochastic vertex model, also called the $U_q(A^{(1)}_n)$ vertex model, satisfies a duality where the indicator function has the form $\{ \eta^x_{[i,n]} \geq \xi^x_{[i,n]} \}$. In other words, for every particle in the $\xi$ configuration of species $i$ at vertex $x$, there must be a particle of species $j>i$ at vertex $x$ in the $\eta$ configuration. The proof follows by applying charge reversal to previously discovered duality functions, which also results in open boundary conditions. As a corollary, we recover the duality for the stochastic six vertex model recently found by Y. Lin.}

\section{Introduction}
In \cite{KMMO}, the authors construct stochastic $S$--matrices arising from the affine quantum group $U_q(A^{(1)}_n)$. In \cite{KuanCMP}, it is shown that the corresponding stochastic vertex model satisfies a Markov duality with its space reversal. The duality function had previously occurred as the duality function between multi--species ASEP$(q,j)$ and its space reversal \cite{KIMRN}, where the function is only non--zero if
$$
\eta_0^x + \ldots + \eta_{n-i}^x \geq \xi_i^x + \ldots + \eta_n^x 
$$
for all lattice site $x$ and species number $i \in \{1,2,\ldots,n\}$. Here, $\eta_i^x$ denotes the number of particles of species $i$ at the lattice site $x$ in a configuration denoted by $\eta$. 

On the other hand, the duality function between multi--species ASEP$(q,j)$ and itself (non space--reversed) is non--zero if
$$
\eta_i^x + \ldots + \eta_{n}^x \geq \xi_i^x + \ldots + \xi_n^x.
$$
These types are duality functions are more suitable for applications of Markov duality, so it is natural to try to find these types of dualities for the stochastic $U_q(A^{(1)}_n)$ vertex models. However, for totally asymmetric models such as the stochastic vertex models, it is necessary to apply a space reversal (see the Remark after Theorem 2.5 of \cite{KIMRN}). In this paper, we show that the simultaneous inversions in the asymmetry parameter $q$ and spectral parameter $z$ will play the same role as a space reversal. This results in a new duality function for the stochastic $U_q(A^{(1)}_n)$ vertex model with open boundary conditions. For the case of the multi--species stochastic six vertex model, it generalizes the duality found in  \cite{YL19} for the single--species stochastic six vertex model. 

We note that there are duality functions for stochastic vertex models which do not have any indicator functions \cite{CorPetCMP}, \cite{YL192}, but we will not discuss these here.

\section{Main Results}

\subsection{Stochastic $S$--matrices}
First let us define some notation. Fix positive integers $l,m$ and $n$. Let $\mathcal{H}$ and $\mathcal{V}$ denote the sets
$$
\mathcal{H} = \{ (\alpha_0,\ldots,\alpha_n) :  \alpha_0 + \ldots + \alpha_n = l \}, \quad \quad  \mathcal{V}_m = \{ (\beta_0,\ldots,\beta_n) :  \beta_0 + \ldots + \beta_n = m \}.
$$
For $i<j$, let $\alpha_{[i,j]} = \alpha_i + \ldots + \alpha_j$, and similarly for $\beta$.
The paper \cite{KMMO} introduces a two--parameter family of stochastic\footnote{In this paper, we follow the common convention among mathematical physicists that a matrix is stochastic if its \textit{columns}, rather than its rows, sum to $1$.} matrices $S(q,z)$ whose rows and columns are indexed by $\mathcal{H} \times \mathcal{V}_m$. We let $S(q,z)_{\alpha\beta}^{\gamma\delta}$ denote the matrix entry with row $(\alpha,\beta)$ and column $(\gamma,\delta)$. The matrix $S(q,z)$ satisfies the conservation property that $S(q,z)$ is only nonzero if $\alpha+\beta = \gamma+\delta$. There are some explicit formulas for the matrix entries of $S(q,z)$ \cite{BoMa16}, but we will not use them here. See the end of this paper for a few examples.

We can represent $S(q,z)$ as vertex weights. Associate to each subscript $i\in \{0,\ldots,n\}$ a color. For $i<j$, we can consider particles of color $i$ to be lighter than particles of color $j$.  In the example below, we have $l=1,m=2,\alpha=(0,1,0),\beta=(1,0,1),\gamma=(0,0,1),\delta=(1,1,0)$.

\begin{center}
\begin{tikzpicture}
\draw [->, red, very thick](7,0) -- (8,0) ;
\draw [->, green, very thick] (8,0) -- (9,0) ;
\draw [->, very thick] (7.95,-1) -- (7.95,0);
\draw [->, green, very thick] (8.05,-1) -- (8.05,0);
\draw [->, black, very thick] (7.95,0) -- (7.95,1);
\draw [->, red, very thick] (8.05,0) -- (8.05,1);

\draw (9.5,0) node {$\gamma$};
\draw (8,-1.5) node {$\beta$};
\draw (8,1.5) node {$\delta$};
\draw (6.5,0) node {$\alpha$};
\end{tikzpicture}
\end{center}

\subsection{Transfer matrices}
The stochastic $S$ matrices can be used to define an interacting particle system. Let $\bar{S}(q,z)$ denote the $\mathcal{V} \times \mathcal{H}$ by $\mathcal{H}\times \mathcal{V}$  matrix, where the entries are defined by
$$
\bar{S}(q,z)_{\beta \alpha}^{\gamma\delta} = S(q,z)_{\alpha\beta}^{\gamma\delta}.
$$
Given a positive integer $L$ and spectral parameters $\vec{z} = (z_1,\ldots,z_L)$, define the transfer matrix $\mathcal{T}_L$ to be the $\mathcal{V}_{m_1} \times \cdots \times \mathcal{V}_{m_L} \times \mathcal{H}$ by  $\mathcal{H} \times \mathcal{V}_{m_1} \times \cdots \times \mathcal{V}_{m_L}$ matrix , given by the composition
$$
\mathcal{T}_L(q,\vec{z})    = \bar{S}_{L-1,L}(q,z_L)\cdots \bar{S}_{12}(q,z_2)\bar{S}_{01}(q,z_1)
$$ 
where
$$
\bar{S}_{j,j+1}(q,z) := \mathrm{Id}^{\otimes j} \otimes \bar{S}(q,z) \otimes  \mathrm{Id}^{ \otimes L-1-j}
$$
is a $(\mathcal{V}_{m_1} \times \cdots \mathcal{V}_{m_{j+1}} \times \mathcal{H} \times \mathcal{V}_{m_{j+2}} \times \cdots \times \mathcal{V}_{m_L}) \times (\mathcal{V}_{m_1} \times \cdots \mathcal{V}_{m_j} \times \mathcal{H} \times \mathcal{V}_{m_{j+1}} \times \cdots \times \mathcal{V}_{m_L}) $ matrix.

These transfer matrices can be viewed as the transition probabilities for a discrete--time totally asymmetric particle system, either on the infinite line or on a finite lattice. We use the bold greek symbols $\boldsymbol{\eta},\boldsymbol{\xi}$ to denote elements of $\mathcal{V}_{m_1} \times \cdots \times \mathcal{V}_{m_L}$, where $\eta^x \in \mathcal{V}_{m_x}$ for $1 \leq x \leq L$. By a slight abuse of notation, we allow $L$ to equal infinity, so that $x$ can take values in $\mathbb{Z}$ or $\mathbb{Z}_{>0}$. Let 
$$
\vert \boldsymbol{\xi} \vert = \sum_x \sum_{i=1}^n \xi^x_i,
$$
and if $\vert \boldsymbol{\xi} \vert <\infty$ then we say that $\boldsymbol{\xi}$ has finitely many particles.

The stochastic matrix of transition probabilities for the discrete--time totally asymmetric particle system is then denoted $\mathcal{P}(\boldsymbol{\eta},\boldsymbol{\eta}')$, and is defined by 
$$
\mathcal{P}(\boldsymbol{\eta},\boldsymbol{\eta}') =  \mathcal{T}_L \left( (\boldsymbol{\eta},\mathbf{0}), (\mathbf{0},\boldsymbol{\eta} ' )  \right).
$$
It is not immediately obvious that $\mathcal{P}$ defines a stochastic matrix. This issue is addressed in section 4.4 of \cite{KuanCMP}. Roughly speaking, if the lattice is infinite, then with probability $1$ the particle must stop at a location finitely far from where it started, and so cannot exit at infinity. If the lattice is finite, we add auxiliary lattice sites at both ends, which have the effect of sealing off the boundaries, so that particles cannot exit or enter. 

We also let $\mathcal{P}_{\text{rev}}$ denote the space--reversed version of $\mathcal{P}$. In $\mathcal{P}$, the particles jump to the right, whereas in $\mathcal{P}_{\text{rev}}$, the particles jump to the left. We also let $\accentset{\circ}{\mathcal{P}}_{\text{rev}}$ denote the process $\mathcal{P}_{\text{rev}}$ where $l$ particles of color $n$ enter from the right boundary at every time step. Such boundary conditions had appeared previously in \cite{Bor16}, in the single--species case. See also \cite{BorWheCol} for the multi--species case. 

If the dependence on the asymmetry parameter $q$ and the spectral parameter $z$ needs to be specified, write $\mathcal{P}(q,\vec{z})$ or $\mathcal{P}_{\text{rev}}(q,\vec{z})$ or $\accentset{\circ}{\mathcal{P}}_{\text{rev}}(q,\vec{z})$

Let $D$ be the duality function 
$$
D(\xi,\eta) = \prod_x [\eta_0^x]_q^! \cdots [\eta_n^x]_q^! \prod_{i=1}^{n} \binomq{\eta_{[i,n]}^x - \xi_{[i+1,n]}^x}{\eta_i^x} \cdot  q^{-\xi_i^x \left( \eta_{[0,i-1]}^x + \sum_{z>x} 2\eta^z_{[0,i-1]}   \right) } ,
$$
where $[k]_q$ is the $q$--deformed integer
$$
[k]_q = \frac{q^k - q^{-k}}{q-q^{-1}},
$$
$[k]_q^!$ is the $q$--deformed factorial $[1]_q[2]_q \cdots [k]_q$, and
$$
\binomq{n}{k} = \frac{[n]_q^! }{[k]_q^! [n-k]_q^!}
$$
is the $q$--deformed binomial. If $n<k$ we set $\binomq{n}{k}=0$. In particular, $D(\xi,\eta)$ is only nonzero if $\eta_{[j,n]}^x \geq \xi_{[j,n]}^x$ for all $x$ and $j$. Note that when all $m_x=m$, then $D$ can also be written as 
$$
D(\xi,\eta) = \text{const} \cdot \prod_x [\eta_0^x]_q^! \cdots [\eta_n^x]_q^! \prod_{i=1}^n \binomq{\eta_{[i,n]}^x - \xi_{[i+1,n]}^x}{\eta_i^x} \cdot  q^{2mx\xi_i^x +\xi_i^x \left( \eta_{[i,n]}^x + \sum_{z>x} 2\eta^z_{[i,n]}   \right) } ,
$$
where const is a constant that does not change under the dynamics. To see this, write
$$
\eta^y_{[0,i-1]} = m - \eta^y_{[i,n]}
$$
for all $y$ and $i$. Plugging this into the original definition of $D$ and setting const to be
$$
\text{const} = q^{ 2(L+1)m \vert \boldsymbol{\xi}\vert}
$$
shows the alternative form of $D$.

Similarly, define
$$
\tilde{D}(\xi,\eta) = \prod_x [\eta_0^x]_q^! \cdots [\eta_n^x]_q^! \prod_{i=1}^n \binomq{\eta_{[i,n]}^x - \xi_{[i+1,n]}^x}{\eta_i^x} \cdot  q^{\xi_i^x \left( \eta_{[i,n]}^x + \sum_{z>x} 2\eta^z_{[i,n]}   \right) }.
$$

The duality result is stated below as an intertwining; this intertwining is the definition of Markov duality. The $^*$ denotes the transposition of the matrix, and $\vec{z}\ ^{-1} $ denotes $ (z_1^{-1},\ldots,z_L^{-1})$.
\begin{theorem}\label{Only} For any $q$ and $\vec{z}$,
$$
{\mathcal{P}}^*(q,\vec{z} ) D = D \accentset{\circ}{\mathcal{P}}_{\text{rev}} (q^{-1} ,\vec{z}\ ^{-1} ) .
$$

\end{theorem}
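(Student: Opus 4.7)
The plan is to deduce this identity by applying a charge-reversal transformation to the Markov duality proved in \cite{KuanCMP}, which has the form
\[
\mathcal{P}^{*}(q,\vec{z})\, D^{\mathrm{old}} \;=\; D^{\mathrm{old}}\, \mathcal{P}_{\text{rev}}(q,\vec{z}),
\]
where $D^{\mathrm{old}}(\xi,\eta)$ is the duality function supported on $\{\eta_{[0,n-i]}^x \geq \xi_{[i,n]}^x\}$. The key tool is the charge-reversal symmetry of the $U_q(A_n^{(1)})$ $S$-matrix: if $\hat{\alpha} := (\alpha_n, \alpha_{n-1}, \ldots, \alpha_0)$, then
\[
S(q,z)_{\alpha\beta}^{\gamma\delta} \;=\; S\!\left(q^{-1}, z^{-1}\right)_{\hat\alpha \hat\beta}^{\hat\gamma \hat\delta},
\]
a CPT-type identity inherited from the quantum group structure, verifiable from the explicit formulas in \cite{KMMO} or \cite{BoMa16}.

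Let $U$ denote the involution on $\Vml$ induced by the color relabeling $i \mapsto n-i$. The pointwise $S$-matrix identity, combined across the transfer matrix $\mathcal{T}_L$, lifts to
\[
\mathcal{P}_{\text{rev}}(q,\vec{z})(U\eta', U\eta) \;=\; \accentset{\circ}{\mathcal{P}}_{\text{rev}}(q^{-1}, \vec{z}^{-1})(\eta', \eta).
\]
The appearance of the circle decoration is precisely the source of the open boundary condition: what in $\mathcal{P}_{\text{rev}}$ was an implicit supply of color-$0$ particles (vacuum) entering from the right becomes, after relabeling, an explicit stream of color-$n$ particles at that boundary, which is the defining feature of $\accentset{\circ}{\mathcal{P}}_{\text{rev}}$.

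Now substitute $\eta \to U\eta$ in the \cite{KuanCMP} identity. Since $\mathcal{P}^{*}$ acts on the $\xi$-coordinate, which we leave untouched, the substitution passes through the left-hand side unchanged; on the right, inserting the symmetry of $\mathcal{P}_{\text{rev}}$ yields
\[
\mathcal{P}^{*}(q,\vec{z})\, D^{\mathrm{old}}(\xi, U\eta) \;=\; D^{\mathrm{old}}(\xi, U\eta)\, \accentset{\circ}{\mathcal{P}}_{\text{rev}}(q^{-1}, \vec{z}^{-1}),
\]
so it remains to verify $D^{\mathrm{old}}(\xi, U\eta) = D(\xi, \eta)$. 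The indicator $\{\eta_{[0,n-i]}^x \geq \xi_{[i,n]}^x\}$ becomes $\{\eta_{[i,n]}^x \geq \xi_{[i,n]}^x\}$ under the relabeling; the prefactor $\prod_x \prod_i [\eta_i^x]_q^!$ is manifestly invariant; and the $q$-binomial factors are insensitive to $q \leftrightarrow q^{-1}$. The only genuine computation is a short check that the exponent of $q$ transforms correctly, which should follow by the identity $\eta^y_{[0,i-1]} = m - \eta^y_{[i,n]}$ already used above.

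The main obstacle will be the careful bookkeeping of boundary behavior under charge reversal: justifying rigorously that the implicit hole-reservoir of $\mathcal{P}_{\text{rev}}$ at $x \to +\infty$ is exactly what the color reversal converts into the explicit color-$n$ reservoir of $\accentset{\circ}{\mathcal{P}}_{\text{rev}}$, and checking that no analogous modification is needed at the left boundary of $\mathcal{P}^{*}$. Once this boundary analysis is set up -- presumably in the spirit of Section 4.4 of \cite{KuanCMP} -- the algebraic transformation of $D^{\mathrm{old}}$ into $D$ is essentially automatic.
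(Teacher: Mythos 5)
Your proposal follows essentially the same route as the paper: the charge-reversal symmetry $S(q,z)_{\alpha\beta}^{\gamma\delta}=S(q^{-1},z^{-1})_{\hat\alpha\hat\beta}^{\hat\gamma\hat\delta}$ (Proposition 3.7 of \cite{KuanCMP}, equation \eqref{I2}) lifted to $\accentset{\circ}{\mathcal{P}}_{\text{rev}}(q^{-1},\vec{z}\ ^{-1})\Pi=\Pi\mathcal{P}_{\text{rev}}(q,\vec{z})$ and then inserted into the old duality $\mathcal{P}^*(q,\vec{z})D\Pi=D\Pi\mathcal{P}_{\text{rev}}(q,\vec{z})$ of Theorem 4.10 of \cite{KuanCMP}, which is exactly the paper's argument written pointwise instead of with the permutation matrix $\Pi$. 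The only difference is that the paper takes the old duality function to be $D\Pi$ by definition (so no exponent check is needed), whereas you leave the identification $D^{\mathrm{old}}(\xi,U\eta)=D(\xi,\eta)$ as a sketched computation; this is a cosmetic rather than structural difference.
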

\begin{proof}
We start with an intertwining of the $S$--matrices, which previously appeared but had not been applied to dualiy. Let $\Pi_{*}$ denote the matrix that reverses the order of the colors. In other words, $\Pi_{\mathcal{H}}$ and $\Pi_{\mathcal{V}}$ denote the permutation matrices with entries
$$
\Pi_{\mathcal{H}}( \alpha,\gamma) = 
\begin{cases} 
1, \text{ if } \alpha_i = \gamma_{n-i} \text{ for all } i\in \{0,1,\ldots,n\} \\
0, \text {else } 
\end{cases}
$$
and
$$
\Pi_{\mathcal{V}}( \beta,\delta) = 
\begin{cases} 
1, \text{ if } \beta_i = \delta_{n-i} \text{ for all } i\in \{0,1,\ldots,n\} \\
0, \text {else } 
\end{cases}.
$$
Proposition 3.7 of \cite{KuanCMP} implies that
\begin{equation}\label{I2}
\PiHV \circ S(q,z) \circ  \PiHV = S(q^{-1},z^{-1}).
\end{equation}
Alternatively, one can check directly when $l=m=1$, and then use the fusion relation in Theorem 3.4 of \cite{KuanCMP}. 

One consequence of \eqref{I2} is that
$$
(\Pi_{\mathcal{V}}\otimes \Pi_{\mathcal{H}}) \circ \bar{S}(q,z) \circ \PiHV = \bar{S}(q^{-1},z^{-1}).
$$
Letting $\Pi^{(L)}$ denote $\Pi_{\mathcal{H}} \otimes \Pi_{\mathcal{V}_{m_1}} \otimes \cdots \otimes \Pi_{\mathcal{V}_{m_L}}$ and $\bar{\Pi}^{(L)}$ denote $\Pi_{\mathcal{V}_{m_1}} \otimes \cdots \otimes \Pi_{\mathcal{V}_{m_L}}\otimes \Pi_{\mathcal{H}}$, the previous equation implies 
\begin{equation}
\mathcal{T}_L(q,\vec{z}) \circ \Pi^{(L)} = \bar{\Pi}^{(L)}\circ \mathcal{T}_L(q^{-1},\vec{z}\ ^{-1}).
\end{equation}
This implies (using the argument and results of section 4.4 of \cite{KuanCMP}) that
\begin{equation}\label{L3}
\accentset{\circ}{\mathcal{P}}_{\text{rev}}(q^{-1} ,\vec{z}\ ^{-1}) \circ \Pi = \Pi \circ {\mathcal{P}}_{\text{rev}}(q,\vec{z})  ,
\end{equation}
where $\Pi$ is defined on $\mathcal{V}_{m_1} \times \cdots \times \mathcal{V}_{m_L}$.

The previous duality result (Theorem 4.10\footnote{There is a typo in the paper \cite{KuanCMP}. Equations (16) and (17) of \cite{KuanCMP}, which define $\Pi D$,  cite \cite{KIMRN} but switch the $\eta$ and the $\xi$. This can also be corrected by reversing the direction of the jumps in $\eta$ and $\xi$  (which corresponds to switching $\eta$ and $\xi$), as noted by \cite{YL19}. In any case, we will not use the explicit expression of $\Pi D$ here. } of \cite{KuanCMP}) says that
\begin{equation}\label{4.10}
\mathcal{P}^*(q, \vec{z}) \circ D \Pi  = D \Pi  \circ \mathcal{P}_{\text{rev}}(q,\vec{z}).
\end{equation}
As noted in Remark 5 of \cite{KuanSF}, intertwinings of the form \eqref{L3} can be used to produce new dualities from old ones. Because $\Pi = \Pi^*$ and $\Pi^2=\mathrm{id}$,  plugging \eqref{L3} into \eqref{4.10} implies that
$$
{\mathcal{P}}^*(q,\vec{z} ) D \Pi = D \accentset{\circ}{\mathcal{P}}_{\text{rev}} (q^{-1} ,\vec{z}\ ^{-1} )\Pi .
$$
Multiplying by $\Pi$ on both sides shows the theorem.
\end{proof}

\begin{reemark}
If the lattice is the infinite line $\mathbb{Z}$, then the theorem can be written as
$$
{\mathcal{P}}^*(q,\vec{z} ) D = \mathfrak{d} D {\mathcal{P}}_{\text{rev}} (q^{-1} ,\vec{z}\ ^{-1} ) ,
$$
where $\mathfrak{d}$ is the diagonal matrix with entries $\mathfrak{d}(\bxi,\bxi) = q^{2 \vert \bxi\vert}$, where $\vert \bxi \vert$ denotes the number of particles in $\bxi$. This is because the particles entering from the right still contribute to the duality, even as the lattice size grows to infinity. See the first example in section 3 for an illustrative example.

If, on the other hand, the lattice is $\{\ldots,-2,-1\}$, then we have a duality between a stochastic vertex model with particles jumping to the left and entering at $-1$, and a stochastic vertex model with particles jumping to the right and exiting from $-1$. Similar types of boundary conditions have occurred in duality results; see \cite{GKRV}, \cite{CGRConsistent}, \cite{KuanSF}. The second example in section 3 also illustrates this type of boundary condition.
\end{reemark}

\begin{corollary}
Suppose that the lattice is the infinite line $\mathbb{Z}$, and all $m_x$ and $l$ equal $1$. Then
$$
\mathcal{P}^*(q ,\vec{z}) \tilde{D} = \tilde{D} \mathcal{P}_{\text{rev}}(q,\vec{z} ).
$$ 
\end{corollary}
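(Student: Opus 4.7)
The plan is to deduce the corollary from the $\mathbb{Z}$-version of Theorem~\ref{Only} stated in the preceding Remark, by rewriting $D$ in terms of $\tilde D$. With all $m_x = m = 1$, the alternative form of $D$ recorded just after its definition shows that
\[
D(\xi,\eta) \;=\; c_{|\bxi|}\cdot g(\xi)\cdot\tilde D(\xi,\eta),
\]
where $g$ is the diagonal operator $g(\xi,\xi)=q^{2\sum_x x|\xi^x|}$ and $c_{|\bxi|}$ is a scalar depending only on the conserved total particle number $|\bxi|$. Because every evolution in the duality preserves $|\bxi|$ on $\mathbb{Z}$, the scalar $c_{|\bxi|}$ factors out on both sides, so for the purposes of the intertwining we may effectively take $D = g\tilde D$.

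Substituting this into the identity $\mathcal{P}^*(q,\vec z)\,D = \mathfrak d\,D\,\mathcal{P}_{\text{rev}}(q^{-1},\vec z^{\,-1})$ from the Remark and using that $g$ and $\mathfrak d$ are both diagonal in $\xi$ (and therefore commute), one obtains
\[
\bigl(g^{-1}\mathcal{P}^*(q,\vec z)\,g\bigr)\,\tilde D \;=\; \mathfrak d\,\tilde D\,\mathcal{P}_{\text{rev}}(q^{-1},\vec z^{\,-1}).
\]
To reach the target $\mathcal{P}^*(q,\vec z)\tilde D = \tilde D\,\mathcal{P}_{\text{rev}}(q,\vec z)$, what remains is a gauge/inversion identity relating the two sides: the diagonal conjugation by $g$ on the left, combined with the particle-count twist $\mathfrak d$ on the right, must precisely undo the simultaneous inversion $(q,\vec z)\mapsto(q^{-1},\vec z^{\,-1})$.

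The main obstacle is establishing this gauge/inversion identity at $l=m=1$. Phrased model-theoretically, it amounts to the statement that the $(q,z)\mapsto(q^{-1},z^{-1})$ symmetry of the $m=l=1$ specialization of the stochastic $U_q(A_n^{(1)})$ $R$-matrix is realized on the transfer matrices by the position-dependent diagonal $g$ and the particle-count twist $\mathfrak d$ alone. For the single-species case $n=1$ this is a classical symmetry of the stochastic six-vertex weights and is exactly what drives Y.~Lin's duality in \cite{YL19}, so the corollary would be its natural multi-species extension. For general $n$, I would verify the identity either by direct computation on the $m = l = 1$ entries of the stochastic $S$-matrix from \cite{BoMa16}, or by reducing it to a simpler $R$-matrix identity via the fusion construction of \cite{KuanCMP} and then using \eqref{I2} together with an appropriate diagonal gauge.
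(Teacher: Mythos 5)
Your reduction is essentially the paper's own first step, just phrased with a different gauge: the paper writes $\tilde D = \mathrm{const}\cdot GD$ with $G(\bxi,\bxi)=\prod_x\prod_i q^{-2mx\xi^x_i}$ (so your $g=G^{-1}$), and your requirement that conjugation by $g$ together with the twist $\mathfrak d$ undo $(q,\vec z)\mapsto(q^{-1},\vec z^{\,-1})$ is exactly the paper's identity \eqref{LF}, since $SGS^*=\mathfrak d^{-1}G$ and $\mathfrak d$ commutes with $\mathcal{P}^*$ by particle conservation. The problem is that you stop precisely where the real work begins: \eqref{LF} is asserted (``must precisely undo the simultaneous inversion'') and only a plan to verify it is offered. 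That identity is the substantive content of the corollary's proof. In the paper it is established by a non-trivial induction: every entry of $\mathcal{P}$ at $l=m=1$ is of the form $b_2^{A_1}b_1^{A_2}((1-b_1)(1-b_2))^{A_3}$ with $b_1=q^2(1-z)/(q^2-z)$, $b_2=(1-z)/(q^2-z)$, the inversion swaps $b_1\leftrightarrow b_2$, and one inducts on $A_2$, handling the base case by decomposing configurations into blocks and the inductive step by relocating a single particle. This bookkeeping is needed because the gauge $G$ is position-dependent: the factor $G(\bxi^+,\bxi^+)/G(\boldsymbol{\eta},\boldsymbol{\eta})$ attached to an entry depends on the jump lengths of all particles, so the identity is a statement about whole transfer-matrix trajectories, not a single-vertex symmetry; a ``direct computation on the $m=l=1$ entries of the $S$-matrix'' does not by itself close the argument.

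Your two suggested routes also point in doubtful directions. The relation \eqref{I2} implements $(q,z)\mapsto(q^{-1},z^{-1})$ via \emph{color reversal} $\Pi$, and that symmetry is already consumed in the proof of Theorem \ref{Only}; invoking it again here would be circular rather than produce the diagonal (color-preserving) gauge identity you need. And a fusion argument cannot give \eqref{LF} in any generality: the paper remarks explicitly that \eqref{LF} fails for higher spin $m>1$, so the identity genuinely hinges on the special structure of the $m=l=1$ weights (all nontrivial weights, including the multi-species exchange weights, lie in $\{b_1,b_2,1-b_1,1-b_2\}$ with $b_1/b_2=q^2$). So the skeleton of your argument matches the paper, but the key lemma is missing, and the proposed ways of supplying it would either be circular or would not apply.
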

\begin{proof}
Now assume the conditions in the second duality result. Let $S$ denote the shift operator defined by 
$$
S( \boldsymbol{\eta},\boldsymbol{\xi} ) = \begin{cases}
1, & \text{if } \eta^x = \xi^{x+1} \text{ for all } x, \\
0,& \text{else}.
\end{cases}
$$
We have that $S^{-1}=S^*$, and the translation invariance says that
$$
S^{-1}\mathcal{P}S  = \mathcal{P}, \quad S^{-1}\mathcal{P}_{\text{rev}}S = \mathcal{P}_{\text{rev}}.
$$
Additionally, from the expressions for $D$ and $\tilde{D}$ it is immediate that
$$
S^{-1}DS=D, \quad S^{-1} \tilde{D} S = \tilde{D}.
$$
Let $G$ be the diagonal matrix with entries
$$
G( \boldsymbol{\xi}, \boldsymbol{\xi}) = \prod_{x \in \mathbb{Z}} \prod_{i=1}^n q^{-2mx \xi^x_i} .
$$
From the definitions of $D$ and $\tilde{D}$, the matrix $G$ relates $D$ and $\tilde{D}$ via
$$
\tilde{D} = \text{const} \cdot GD.
$$

Once we show 
\begin{equation}\label{LF}
SGS^* \mathcal{P}^*(q^{}, \vec{z}\ ^{})G^{-1} = \mathcal{P}^*(q^{-1} ,\vec{z} \ ^{-1}),
\end{equation}
this proves the result. Indeed, assuming \eqref{LF},
\begin{align*}
\mathcal{P}^*(q^{}, \vec{z}\ ^{}) D &= D \accentset{\circ}{\mathcal{P}}_{\text{rev}}(q^{-1}, \vec{z}\ ^{-1}) \\
\Longrightarrow SGS^*\mathcal{P}^*(q^{}, \vec{z}\ ^{}) G^{-1}GD &= SGS^*D \accentset{\circ}{\mathcal{P}}_{\text{rev}}(q^{-1}, \vec{z}\ ^{-1}) \\
\Longrightarrow  \mathcal{P}^*(q^{-1}, \vec{z}\ ^{-1}) \tilde{D} &= S \tilde{D} S^* \accentset{\circ}{\mathcal{P}}_{\text{rev}}(q^{-1}, \vec{z}\ ^{-1}), 
\end{align*}
which finally yields $ \mathcal{P}^*(q^{-1}, \vec{z}\ ^{-1}) \tilde{D} =  \tilde{D}  \accentset{\circ}{\mathcal{P}}_{\text{rev}}(q^{-1}, \vec{z}\ ^{-1})$.

To show \eqref{LF}, first write it in the form
$$
 \mathcal{P}(q^{-1} ,\vec{z}\ ^{-1}) = G^{-1} \mathcal{P}(q^{}, \vec{z}) SGS^*.
$$
Set
$$
 b_1 = \frac{q^2(1-z)}{q^2-z}, \quad b_2 = \frac{1-z}{q^2-z}
$$
and note that $b_1/b_2 = q^2$, and the simultaneous inversions $q\mapsto q^{-1},z\mapsto z^{-1}$ switches $b_1$ and $b_2$. Every matrix entry of $\mathcal{P}(q^{-1}, \vec{z}\ ^{-1})$ is of the form
$$
b_2^{A_1} b_1^{A_2} ((1-b_1)(1-b_2))^{A_3}
$$
for some non--negative integers $A_1,A_2,A_3$. Proceed by induction on the value of $A_2$. 

Suppose $A_2=0$. Define a \textit{block} in the configuration $\bxi$ to be an interval $\{x,x+1,\ldots,y\}$ where $\xi^{x-1} \neq \xi^x = \xi^{x+1} =  \cdots = \xi^y \neq \xi^{y+1}.$ The condition that $A_2=0$ means that every particle either stayed put, or was part of a block that jumped one step to the right. The number of particles that stayed put is $A_1$, and the number of particles that were part of jumping blocks is $A_3$. Thus, the corresponding matrix entry of $G^{-1} \mathcal{P}(q^{}, \vec{z} ^{}) SGS^*$
is
$$
b_1^{A_1}  ((1-b_2)(1-b_1))^{A_3} \cdot { \left( \frac{b_2}{b_1}\right)}^{A_1}1^{A_3} , 
$$
which equals $b_2^{A_1} ((1-b_1)(1-b_2))^{A_3}$.

Now suppose we know \eqref{LF} for every matrix entry satisfying $A_2=r$. Let $\boldsymbol{\eta},\bxi$ be particle configurations such that the $(\boldsymbol{\eta},\bxi)$ entry of $\mathcal{P}(q^{-1}, \vec{z}\ ^{-1})$ equals
$$
b_2^{A_1} b_1^{r+1} ((1-b_1)(1-b_2))^{A_3}.
$$
Then there exists a $\boldsymbol{\zeta}$ such that the $(\boldsymbol{\zeta},\bxi)$ entry of $\mathcal{P}(q^{-1}, \vec{z}\ ^{-1})$ equals
$$
b_2^{C_1} b_1^r ((1-b_1)(1-b_2))^{C_3}
$$
for some $C_1,C_3$. The configuration $\boldsymbol{\zeta}$ can be constructed from $\boldsymbol{\eta}$ by taking a particle at lattice site $y$ and moving it to lattice site $x-1$ (where $x \leq y$), and $\{x,x+1,\ldots,y\}$ is a block in $\boldsymbol{\eta}$. Then 
$
C_1 - A_1 = y-x. 
$
The $(\boldsymbol{\zeta},\bxi)$ entry of $G^{-1} \mathcal{P}(q^{}, \vec{z}) SGS^* $ is, by definition, 
$$
[\mathcal{P}(q^{},\vec{z}) ](\boldsymbol{\zeta},\bxi) \frac{G(\boldsymbol{\xi}^+, \boldsymbol{\xi}^+)}{G(\boldsymbol{\zeta},\boldsymbol{\zeta})},
$$
where the superscript $^+$ denotes the particle configuration obtained by shifting every particle one lattice site to the right. By the induction hypothesis,
$$
[\mathcal{P}(q^{},\vec{z} ) ](\boldsymbol{\zeta},\bxi) \frac{G(\boldsymbol{\xi}^+, \boldsymbol{\xi}^+)}{G(\boldsymbol{\zeta},\boldsymbol{\zeta})}= [\mathcal{P}(q^{-1},\vec{z} \ ^{-1}) ](\boldsymbol{\zeta}, \bxi).  
$$
By construction, 
$$
G(\bzeta,\bzeta) = G(\beto,\beto) q^{2(y-x+1)} = G(\beto,\beto) \left( \frac{b_1}{b_2}\right)^{C_1-A_1+1}.
$$
So by translation invariance, 
$$
b_1^{C_1} b_2^r ((1-b_2)(1-b_1))^{C_3}\frac{G(\boldsymbol{\xi}^+, \boldsymbol{\xi}^+)}{G(\boldsymbol{\eta},\boldsymbol{\eta})} \left( \frac{b_2}{b_1}\right)^{C_1-A_1+1} = b_2^{C_1} b_1^r ((1-b_1)(1-b_2))^{C_3},
$$
which simplifies to 
$$
b_1^{A_1} b_2^{r+1} ((1-b_2)(1-b_1))^{C_3}\frac{G(\boldsymbol{\xi}^+, \boldsymbol{\xi}^+)}{G(\boldsymbol{\eta},\boldsymbol{\eta})} = b_2^{A_1} b_1^{r+1} ((1-b_1)(1-b_2))^{C_3},
$$
Multiplying both sides by $((1-b_1)(1-b_2))^{A_3-C_3}$, we arrive at
$$
[\mathcal{P}(q,\vec{z} ) ](\boldsymbol{\beto},\bxi) \frac{G(\boldsymbol{\xi}^+, \boldsymbol{\xi}^+)}{G(\boldsymbol{\beto},\boldsymbol{\beto})} = [\mathcal{P}(q^{-1} , \vec{z}\ ^{-1})]  (\beto, \bxi),
$$
as needed.

\end{proof}

\begin{reemark}
In the single--species ($n=1$) case of the corollary, the stochastic vertex model reduces to the stochastic six vertex model. After replacing $q$ by $q^{-1}$, the duality reduces to the duality in \cite{YL19}.
\end{reemark}

\begin{reemark}
In the degeneration of the stochastic six vertex model to ASEP, we recover a duality for ASEP. Heuristically, as $z\rightarrow 1$,
\begin{align*}
\mathcal{P}(q,z) &\approx S -  L_{\text{ASEP}({q^2,1})} S(z-1) + \mathcal{O}((z-1)^2) ,\\
\mathcal{P}(q^{-1},z^{-1}) &\approx S^* -  L_{\text{ASEP}({1,q^2})} S^*(z-1) + \mathcal{O}((z-1)^2) .
\end{align*}
Here, $L_{\text{ASEP}(l,r)}$ is the generator of ASEP with left jump rates $l$ and right jump rates $r$. The two dualities in this paper then become the Sch\"{u}tz ASEP duality \cite{Sch97} and the ASEP duality in Corollary 4.7(c) from \cite{KuanSF}. Note that the latter duality function had already occurred in \cite{Sch97}, using time reversal rather than space reversal. The relation \eqref{LF} then reduces to the intertwining in the proof of Theorem 4.6(b) of \cite{KuanSF}.
\end{reemark}

\begin{reemark}
As mentioned in the appendix of \cite{CorPetCMP}, the degeneration of the stochastic six vertex model to ASEP does not for higher spin $(m>1)$ models, because non--negativity fails to hold. Thus, the $m=1$ case can be regarded as a unique case; \eqref{LF} does not hold in general, for example.
\end{reemark}

\section{Examples}
When $l=1$, there is a simple expression for $S(z)$. For $0 \leq j \leq n$, let $\epsilon_j \in \HH$ denote the vector $(0,\ldots,0,1,0,\ldots,0)$, where the $1$ is located the $j$th index. For any $0 \leq i\leq j \leq n$, let $\alpha_{[i,j]} = \alpha_i  + \alpha_{i+1} + \cdots + \alpha_j$. From (25) of \cite{KuanCMP}\footnote{Note that we are using different notation than in that paper.}, 
\begin{equation}\label{Ex}
(q^{m+1}-z) S(z)_{\epsilon_j, \beta}^{\epsilon_k, \delta} = 1_{\{\epsilon_j + \beta = \epsilon_k + \delta\}} \times
\begin{cases}
q^{2\beta_{[k,n]}-m+1} ( 1- q^{-2\beta_k+m-1}z), &\text{ if } k=j ,\\
-q^{2\beta_{[k+1,n]}-m+1} (1-q^{2\beta_k}), &\text { if } k < j ,\\
-q^{2\beta_{[k+1,n]}}z (1-q^{2\beta_k}), & \text{ if } k > j .
\end{cases}
\end{equation}

Below are the four nontrivial vertex weights for $l=m=1$:

\begin{center}
\begin{tikzpicture}[scale=0.8]
\rotatebox{360}{
\draw [dashed](-1.5,0) -- (0,0) ;
\draw [->, very thick] (0,0) -- (1.5,0) ;
\draw [->, very thick] (0,-1.5) -- (0,0);
\draw [dashed] (0,0) -- (0,1.5);
\draw (0,-2.1) node  {$\frac{z(q^2-1)}{q^2-z}$} ;
}

\draw [dashed](2.5,0) -- (4,0) ;
\draw [dashed] (4,0) -- (5.5,0) ;
\draw [->, very thick] (4,-1.5) -- (4,0);
\draw [->, very thick] (4,0) -- (4,1.5);
\draw (4,-2.1) node  {$\frac{q^2(1-z)}{q^2-z}$} ;

\draw [->, very thick](6.5,0) -- (8,0) ;
\draw [dashed] (8,0) -- (9.5,0) ;
\draw [dashed] (8,-1.5) -- (8,0);
\draw [->, very thick] (8,0) -- (8,1.5);
\draw (8,-2.1) node  {$\frac{q^2-1}{q^2-z}$} ;

\draw [->, very thick](10.5,0) -- (12,0) ;
\draw [->, very thick] (12,0) -- (13.5,0) ;
\draw [dashed] (12,-1.5) -- (12,0);
\draw [dashed] (12,0) -- (12,1.5);
\draw (12,-2.1) node  {$\frac{1-z}{q^2-z}$} ;
\end{tikzpicture}
\end{center}
In the image below, {the  image shows $\mathcal{T}_L^{\text{rev}}$, which has weight  $(q^2-z)^{-3} (-z(1-q^2)) \cdot (1-z) \cdot (-(1-q^2) )$.}

\begin{center}

\begin{tikzpicture}[scale=0.8]
\draw [dashed](-1.5,0) -- (0,0) ;
\draw [<-, very thick] (0,0) -- (1.5,0) ;
\draw [very thick](1.5,0) -- (3,0) ;
\draw [<-, very thick](3,0) --(4.5,0);
\draw [very thick](4.5,0) -- (6,0) ;
\draw [dashed](6,0) -- (7.5,0) ;
\draw [<-, very thick] (0,1.5) -- (0,0);
\draw [dashed] (0,0) -- (0,-1.5);
\draw [dashed] (3,-1.5) -- (3,0);
\draw [dashed] (3,0) -- (3,1.5);
\draw [dashed] (6,1.5) -- (6,0);
\draw [<-,very thick] (6,0) -- (6,-1.5);
\end{tikzpicture}
\end{center}

Below are the eight nontrivial vertex weights when $l=1,m=2$. 

\begin{center}
\begin{tikzpicture}[scale=0.8]
\draw [dashed](-1.5,0) -- (0,0) ;
\draw [->, very thick] (0,0) -- (1.5,0) ;
\draw [->, very thick] (-0.05,-1.5) -- (-0.05,0);
\draw [->, very thick] (0.05,-1.5) -- (0.05,0);
\draw [dashed] (-0.05,0) -- (-0.05,1.5);
\draw [->, very thick] (0.05,0) -- (0.05,1.5);
\draw (0,-2.1) node  {$\frac{z(q^4-1)}{q^3-z}$} ;
\draw [dashed](2.5,0) -- (4,0) ;
\draw [dashed] (4,0) -- (5.5,0) ;
\draw [->, very thick] (3.95,-1.5) -- (3.95,0);
\draw [->, very thick] (4.05,-1.5) -- (4.05,0);
\draw [->, very thick] (3.95,0) -- (3.95,1.5);
\draw [->, very thick] (4.05,0) -- (4.05,1.5);
\draw (4,-2.1) node  {$\frac{ q^3(1-qz) }{q^3-z}$} ;
\draw [dashed](6.5,0) -- (8,0) ;
\draw [->, very thick] (8,0) -- (9.5,0) ;
\draw [dashed] (7.95,-1.5) -- (7.95,0);
\draw [->, very thick] (8.05,-1.5) -- (8.05,0);
\draw [dashed] (7.95,0) -- (7.95,1.5);
\draw [dashed] (8.05,0) -- (8.05,1.5);
\draw (8,-2.1) node  {$\frac{ z(q^2-1) }{q^3-z}$} ;
\draw [dashed] (10.5,0) -- (12,0) ;
\draw [dashed] (12,0) -- (13.5,0) ;
\draw [dashed] (11.95,-1.5) -- (11.95,0);
\draw [->, very thick] (12.05,-1.5) -- (12.05,0);
\draw [dashed] (11.95,0) -- (11.95,1.5);
\draw [->, very thick] (12.05,0) -- (12.05,1.5);
\draw (12,-2.1) node  {$\frac{ q^3(1-q^{-1}z) }{q^3-z}$} ;
\draw [->, very thick] (0,-5) -- (1.5,-5) ;
\draw [<-, very thick] (0,-5) -- (-1.5,-5) ;
\draw [dashed] (-0.05,-6.5) -- (-0.05,-5);
\draw [dashed] (0.05,-6.5) -- (0.05,-5);
\draw [dashed] (-0.05,-5) -- (-0.05,-3.5);
\draw [dashed] (0.05,-5) -- (0.05,-3.5);
\draw [->, very thick] (4.05,0) -- (4.05,1.5);
\draw (0,-7) node {$\frac{q^{-1}(1-  qz)}{q^3-z}$};
\draw [dashed] (4,-5) -- (5.5,-5) ;
\draw [<-, very thick] (4,-5) -- (2.5,-5) ;
\draw [dashed] (3.95,-6.5) -- (3.95,-5);
\draw [dashed] (4.05,-6.5) -- (4.05,-5);
\draw [->, very thick] (3.95,-5) -- (3.95,-3.5);
\draw [dashed] (4.05,-5) -- (4.05,-3.5);
\draw (4,-7) node {$\frac{q^{-1}(q^4 -1 )}{q^3-z}$};
\draw [<-, very thick](8,-5) -- (6.5,-5);
\draw [dashed](7.95,-6.5) -- (7.95,-5) ;
\draw [->, very thick] (8,-5) -- (9.5,-5) ;
\draw [dashed] (8-0.05,-5-1.5) -- (8-0.05,0-5);
\draw [->, very thick] (8+0.05,-5-1.5) -- (8+0.05,0-5);
\draw [dashed] (8-0.05,0-5) -- (8-0.05,1.5-5);
\draw [->, very thick] (8+0.05,0-5) -- (8+0.05,1.5-5);
\draw (8,-7.1) node  {$\frac{ q(1-q^{-1}z)}{q^3-z}$} ;
\draw [->, very thick](8+2.5,-5) -- (8+4,-5) ;
\draw [dashed] (8+4,-5) -- (8+5.5,-5) ;
\draw [->, very thick] (8+3.95,-5-1.5) -- (8+3.95,-5);
\draw [dashed] (8+4.05,-5-1.5) -- (8+4.05,-5);
\draw [->,very thick] (8+3.95,-5) -- (8+3.95,-5+1.5);
\draw [->, very thick] (8+4.05,-5) -- (8+4.05,-5+1.5);
\draw (8+4,-7.1) node  {$\frac{ q(q^2-1) }{q^3-z}$} ;
\end{tikzpicture}
\end{center}

We work out some examples:
\begin{itemize}
\item
Let us verify that on the infinite line, 
$$
[\mathcal{P}^*(q ,\vec{z} ) D] (\bxi, \boldsymbol{\eta}) = [D \accentset{\circ}{\mathcal{P}}_{\text{rev}}(q^{-1} ,\vec{z}\ ^{-1} )](\bxi, \boldsymbol{\eta}) 
$$
when $\bxi$ consists solely of one particle at lattice site $0$ and $\boldsymbol{\eta}$ consists of $k$ particles at lattice site $0$, and $m_0=m$. The left--hand--side simplifies as 
$$
[\mathcal{P}^*(q^{}  ,\vec{z}\ ^{} ) ](\bxi,\bxi) D(\bxi,\boldsymbol{\eta}) =  \frac{q^{m+1}(1-q^{-(m+1)}z)}{q^{m+1}-z}  \cdot \frac{q^{k} - q^{-k}}{q-q^{-1}} q^{k}.
$$
The right--hand--side has two terms, one corresponding to when all particles in $\boldsymbol{\eta}$ stay still, and the other corresponding to when one particle jumps:
$$
[D \accentset{\circ}{\mathcal{P}}_{\text{rev}}(q^{-1} ,\vec{z}\ ^{-1} )](\bxi, \boldsymbol{\eta}) 
= 
\left(\frac{q^{-(m+1)}(1 - q^{m-2k+1}z^{-1}) }{q^{-(m+1)}-z^{-1}} \cdot \frac{q^k - q^{-k}}{q-q^{-1}} q^{k}
+
\frac{z^{-1}(q^{-2k}-1)}{q^{-(m+1)}-z^{-1}}  \cdot \frac{q^{k-1} - q^{-k+1}}{q-q^{-1}} q^{k-1}\right)q^2.
$$
The factor of $q^2$ comes from the particle entering from the right. From a direct calculation, both sides equal
\begin{equation*}
\frac{q^{k+1}\left(q^k - q^{-k} \right)\left(-q^{m}+q z\right)}{\left(q-q^{-1}\right)\left(q^{1+m}-z\right)}.
\end{equation*}
\item
Let us verify that on $\{\ldots,-3,-2,-1\}$,
$$
[\mathcal{P}^*(q ,\vec{z} ) D] (\bxi, \boldsymbol{\eta}) = [D \accentset{\circ}{\mathcal{P}}_{\text{rev}}(q^{-1} ,\vec{z}\ ^{-1} )](\bxi, \boldsymbol{\eta}) 
$$
where $\bxi$ consists solely of one particle at lattice site $-r$ and $\beto$ is empty. The left--hand--side is
$$
\frac{ z(q^2-1)}{(q^{m_{-r}+1}-z)} \cdot \frac{q^{-m_{-r+1}+1} - z}{q^{m_{-r+1}+1}-z} \cdots \cdot \frac{q^{-m_{-1}+1}-z}{q^{m_{-1}+1}-z},
$$
where the right--hand--side is
\begin{align*}
&\frac{q^{-m_{-1}+1}-z}{q^{m_{-1}+1}-z} \cdot \cdots \frac{q^{-m_{-r+1}+1} - z}{q^{m_{-r+1}+1}-z} \cdot \frac{q^{-m_r+1}(q^{2m_r}-1)}{q^{m_r+1}-z}\Big|_{z\mapsto z^{-1},q\mapsto q^{-1}} \cdot D(\bxi,\bxi) \\
&= \frac{q^{2m_{-1}}(q^{-m_{-1}+1}-z)}{q^{m_{-1}+1}-z} \cdot \cdots \frac{q^{2m_{-r+1}}(q^{-m_{-r+1}+1} - z)}{q^{m_{-r+1}+1}-z} \cdot \frac{zq^{m_{-r}-1}\cdot q^{-m_{-r}+1}(q^{2m_{-r}}-1)}{q^{m_{-r}+1}-z} \\
& \quad \quad \times \frac{1}{[m_r]_q} q^{-(m_{-r}-1) -\sum_{s=-r+1}^{-1} 2m_s} ,\\ 
\end{align*}
which equals the left--hand--side.

\end{itemize}

\bibliographystyle{alpha}
\bibliography{ShortNotev2}

\end{document}